\DeclareMathOperator{\nrd}{nrd} %Reduced norm
\DeclareMathOperator{\disc}{disc}
\DeclareMathOperator{\discrd}{discrd}
\DeclareMathOperator{\Nm}{Nm}
\DeclareMathOperator{\Mat}{Mat}
\DeclareMathOperator{\PSL}{PSL}
\DeclareMathOperator{\PSU}{PSU}
\DeclareMathOperator{\SU}{SU}
\DeclareMathOperator{\Tr}{Tr}
\DeclareMathOperator{\Ot}{O}
\DeclareMathOperator{\inter}{int}
\DeclareMathOperator{\exter}{ext}
\DeclareMathOperator{\red}{red}
\DeclareMathOperator{\rad}{rad}
\newcommand{\Z}{\ensuremath{\mathbb{Z}}}
\newcommand{\R}{\ensuremath{\mathbb{R}}}
\newcommand{\Q}{\ensuremath{\mathbb{Q}}}
\newcommand{\uhp}{\ensuremath{\mathbb{H}}} %Upper half plane
\newcommand{\Guhp}{\ensuremath{\Gamma\backslash\uhp}} %Gamma quotient H
\newcommand{\ud}{\ensuremath{\mathbb{D}}} %Unit disc
\newcommand{\gamphi}{\ensuremath{\Gamma^{\phi}}}
\newcommand{\Ord}{\ensuremath{\mathrm{O}}}
\newcommand{\sm}[4]{\ensuremath{\left(\begin{smallmatrix} #1 & #2\\#3 & #4\end{smallmatrix}\right)}}%Small 2x2 matrix
\newcommand{\genmtx}{\sm{a}{b}{c}{d}}%Generic matrix [a,b;c,d] as a small matrix
\newcommand{\spa}[1]{\ensuremath{\left\langle #1\right\rangle}}
\newtheorem{theorem}{Theorem}[section]
\newtheorem{proposition}[theorem]{Proposition}
\theoremstyle{definition}
\newtheorem{algorithm}[theorem]{Algorithm}
\newtheorem{definition}[theorem]{Definition}
\newtheorem{remark}[theorem]{Remark}
\newtheorem{heuristic}[theorem]{Heuristic}
\newtheorem{observation}[theorem]{Observation}
\numberwithin{equation}{section}
\begin{document}

%Document Info
\title[Fundamental domains]{Improved computation of fundamental domains for arithmetic Fuchsian groups}
\author[J. Rickards]{James Rickards}
\address{University of Colorado Boulder, Boulder, Colorado, USA}
\email{james.rickards@colorado.edu}
\urladdr{https://math.colorado.edu/~jari2770/}
\date{\today}
\thanks{I thank John Voight and Aurel Page for their useful discussions and comments. I also thank Bill Allombert for his help and suggestions with PARI/GP, and the anonymous reviewers for their helpful comments. This research was supported by an NSERC Vanier Scholarship at McGill University. The author is currently partially supported by NSF-CAREER CNS-1652238 (PI Katherine E. Stange).}
\subjclass[2020]{Primary 11Y40; Secondary 11F06, 20H10, 11R52}
\keywords{Shimura curve, fundamental domain, quaternion algebra, algorithm.}
\begin{abstract}
A practical algorithm to compute the fundamental domain of an arithmetic Fuchsian group was given by Voight, and implemented in Magma. It was later expanded by Page to the case of arithmetic Kleinian groups. We combine and improve on parts of both algorithms to produce a more efficient algorithm for arithmetic Fuchsian groups. This algorithm is implemented in PARI/GP, and we demonstrate the improvements by comparing running times versus the live Magma implementation.
\end{abstract}
\maketitle

\setcounter{tocdepth}{1}
\tableofcontents

\section{Introduction}
Let $\Gamma$ be a discrete subgroup of $\PSL(2, \R)$, which acts on the hyperbolic upper half plane $\uhp$. Assume that the quotient space $\Guhp$ has finite hyperbolic area $\mu(\Gamma)$, and denote the hyperbolic distance function on $\uhp$ by $d$. Let $p\in\uhp$ have trivial stabilizer under the action of $\Gamma$. Then the space
\[D(p):=\{z\in\uhp:d(z, p)\leq d(gz, p)\text{ for all $g\in\Gamma$}\}\]
forms a fundamental domain for $\Guhp$, and is known as a Dirichlet domain. It is a connected region whose boundary is a closed hyperbolic polygon with finitely many sides. Two examples of Dirichlet domains are given in Figure \ref{fig:twoexamples}; the boundaries of the domains are in green, interiors in grey, and they are displayed in the unit disc model of hyperbolic space.

\begin{figure}[ht]
	\centering
	\begin{subfigure}{.5\textwidth}
		\centering
		\includegraphics[width=.9\linewidth]{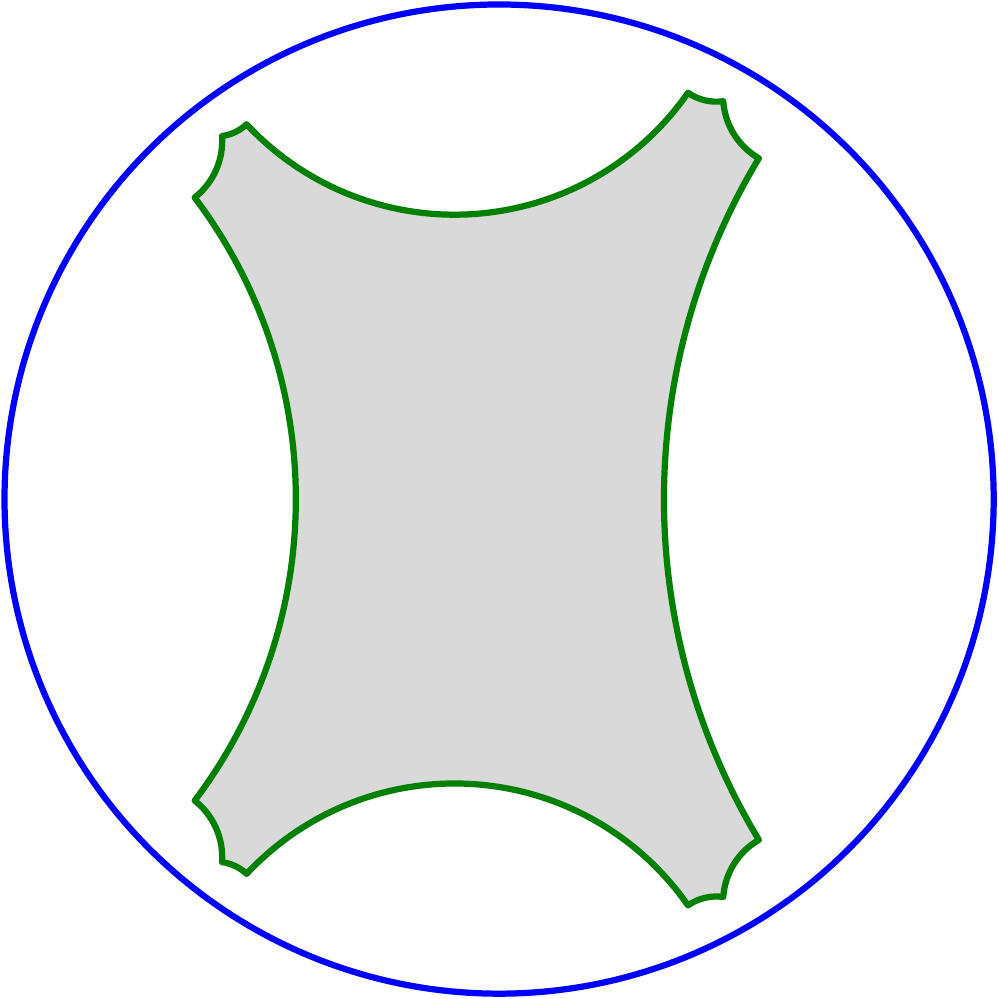}
		\caption{$F=\Q$, $\mathfrak{D}=21$.}
	\end{subfigure}%
	\begin{subfigure}{.5\textwidth}
		\centering
		\includegraphics[width=.9\linewidth]{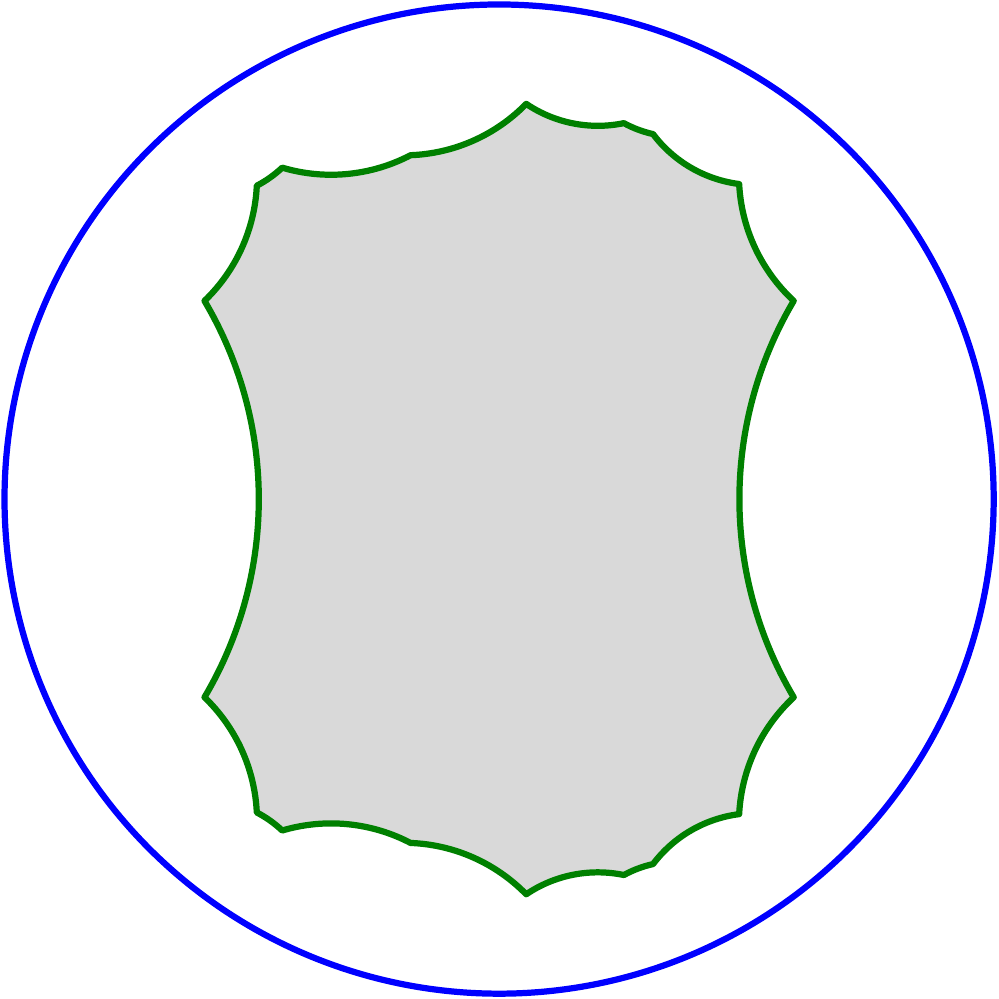}
		\caption{$F=\Q(\sqrt{5})$, $\Nm_{F/\Q}(\mathfrak{D})=61$.}
	\end{subfigure}
	\caption{Dirichlet domains corresponding to norm 1 unit groups of maximal orders in a quaternion algebra of discriminant $\mathfrak{D}$ over a number field $F$.}\label{fig:twoexamples}
\end{figure}

Explicitly computing fundamental domains has many applications, including:
\begin{itemize}
\item Computing a presentation for $\Gamma$ with a minimal set of generators (Theorem 5.1 of \cite{JV09});
\item Solving the word problem with respect to this set of generators (Algorithm 4.3 of \cite{JV09});
\item Computing Hilbert modular forms (\cite{DemV13});
\item Efficiently computing the intersection number of pairs of closed geodesics (\cite{JR21shim}).
\end{itemize}

In \cite{JV09}, John Voight published an algorithm to compute $D(p)$. The algorithm has two main parts:
\begin{itemize}
\item Element enumeration: algebraic algorithms to produce non-trivial elements of $\Gamma$, which are added to a set $G$. This step is given for \textit{arithmetic Fuchsian groups}, which are commensurable with unit groups of maximal orders in a quaternion algebra over a totally real number field that is split at exactly one real place.
\item Geometry: geometric algorithms used to compute the fundamental domain of $\spa{G}$. This step is valid for all Fuchsian groups $\Gamma$.
\end{itemize}
The process terminates once the hyperbolic area of $\spa{G}\backslash\uhp$ equals $\mu(\Gamma)$ (computed with an explicit formula).

These algorithms were implemented in Magma \cite{MAGMA}, and do a reasonable job with small cases, but do not scale very well. An improvement to the enumeration algorithms was given by Page in \cite{AP10}: he replaces the deterministic element generation by a probabilistic algorithm, which in practice performs significantly better. He also generalized the setting from arithmetic Fuchsian groups to arithmetic Kleinian groups, and has a Magma implementation available from his website.

In this paper, we aim to further improve the computation by improving the geometric part (as described by Voight), and refining the element enumeration (as described by Page). These algorithms have been implemented by the author in PARI/GP (\cite{PARI}), and are publicly available in a GitHub repository (\cite{GHfdom}). Sample running time comparing the live Magma implementation and the PARI implementation are found in Table \ref{table:rtimes1} (all computations were run on the same McGill University server). The degree and discriminant of the base number field $F$, norm to $\Q$ of the discriminant of the algebra, area, number of sides, and running times are recorded. 

\begin{table}[hbt]
\centering
\caption{Running times of the PARI versus the Magma implementation.}\label{table:rtimes1}
\begin{tabular}{|c|c|c|c|c|c|c|c|} 
\hline
$\deg(F)$ & $\disc(F)$ & $N(\mathfrak{D})$ & Area     & Sides & t(MAGMA) (s) & t(PARI) (s) & $\frac{\text{t(MAGMA)}}{\text{t(PARI)}}$\\ \hline
1         &          1 &                33 & 20.943   & 17    & 13.190       & 0.022    & 599.5   \\ \hline
1         &          1 &               793 & 753.982  & 640   & 15727.170    & 1.718    & 9154.3  \\ \hline
2         &         33 &                37 & 226.195  & 222   & 297.750      & 0.946    & 314.7   \\ \hline
2         &         44 &                79 & 571.770  & 552   & 4182.640     & 3.142    & 1331.2  \\ \hline
3         &        473 &                99 & 418.879  & 406   & 104146.830   & 4.382    & 23767.0 \\ \hline
4         &      14656 &                17 & 469.145  & 454   & 2487.800     & 12.107   & 205.5   \\ \hline
5         &    5763833 &                 1 & 4490.383 & 4294  & 2746313.540  & 1242.494 & 2210.3  \\ \hline
7         &   20134393 &               119 & 1507.964 & 1446  & 2236865.680  & 1234.850 & 1811.4  \\ \hline
\end{tabular}
\end{table}

In Section \ref{sec:geometry}, we detail the geometric portion of the algorithm, and compare the theoretical running time with Voight's paper. Taking element generation as an oracle, we give the general algorithm to compute the fundamental domain in Section \ref{sec:fullalg}. Section \ref{sec:enumeration} specializes Page's enumeration to our setting, and investigates optimal selection of required constants. Finally, Section \ref{sec:time} gives more sample running times over a range of $\mu(\Gamma)$ for $\deg(F)\leq 4$.

\section{Geometry}\label{sec:geometry}

Instead of working in the upper half plane, it is better to work with the unit disc model, $\ud$ (final results can be transferred back if desired). To this end, fix a $p\in\uhp$ which has trivial stabilizer under the action of $\Gamma$. Consider the map $\phi:\uhp\rightarrow\ud$ given by
\[\phi(z)=\dfrac{z-p}{z-\overline{p}},\]
which is the conformal equivalence between $\uhp$ and $\ud$ that sends $p$ to $0$. The group $\Gamma$ now acts on $\ud$ via $\Gamma^{\phi}:=\phi\Gamma\phi^{-1}\subseteq\PSU(1, 1)$.

\subsection{Normalized boundary}

\begin{definition}
For a non-identity element $g=\genmtx\in\gamphi$, define the \textit{isometric circle} of $g$ to be
\[I(g):=\{z\in\ud:|cz+d|=1\}.\]
\end{definition}
As the point $p$ has trivial stabilizer under $\Gamma$, $c\neq 0$, hence $I(g)$ is an arc of the circle with radius $\frac{1}{|c|}$ and centre $\frac{-d}{c}$. By convention, the arc runs counterclockwise from the initial point to the terminal point. Furthermore, define
\[\inter(I(g)):=\{z\in\ud:|cz+d|<1\},\qquad\exter(I(g)):=\{z\in\ud:|cz+d|>1\},\]
to be the interior and exterior of $I(g)$ respectively. See Figure \ref{fig:Ig} for an example.

\begin{figure}[htb]
	\includegraphics{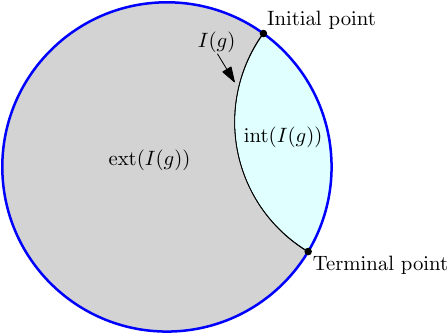}
	\caption{$I(g)$ example}\label{fig:Ig}
\end{figure}

The isometric circle has the following alternate characterization:
\[d(z,0)<d(gz, 0)\qquad\text{ if and only if }\qquad z\in\exter(I(g)).\]
The analogous statements with $<$ replaced by $=$ or $>$ and $\exter(I(g))$ replaced by $I(g)$ or $\inter(I(g))$ respectively also hold (Proposition 1.3b of \cite{JV09}). 

\begin{definition}
Let $G\subset\gamphi\backslash\{1\}$ be a subset, and define the \textit{exterior domain} of $G$ to be
\[\exter(G)=\overline{\bigcap_{g\in G}\exter(I(g))}.\]
In particular, $D(0)=\exter(\gamphi\backslash\{1\})$. 
\end{definition}

Following Voight, let $G\subset\gamphi\backslash\{1\}$ be finite, and let $E=\exter(G)$. Then $E$ is bounded by a generalized hyperbolic polygon, with sides being:
\begin{itemize}
\item subarcs of $I(g)$ for $g\in G$ (proper side);
\item arcs of the unit circle (infinite side);
\end{itemize}
and vertices being:
\begin{itemize}
\item intersections of $I(g)$ with $I(g')$ for $g\neq g'$ (proper vertex);
\item intersections of $I(g)$ with the unit circle (vertex at infinity). 
\end{itemize}
The polygon $E$ can be neatly expressed via its normalized boundary.

\begin{definition}
A \textit{normalized boundary} of $E$ is a sequence $U=g_1, g_2, \ldots, g_k$ such that:
\begin{itemize}
\item $E=\exter(U)$;
\item The counterclockwise consecutive proper sides of $E$ lie on $I(g_1), I(g_2), \ldots, I(g_k)$;
\item The vertex $v$ with minimal argument in $[0, 2\pi)$ is either a proper vertex with $v\in I(g_1)\cap I(g_2)$, or a vertex at infinity with $v\in I(g_1)$.
\end{itemize}
\end{definition}

\begin{figure}[ht]
\centering
\begin{minipage}{.5\textwidth}
  \centering
  \includegraphics[width=.9\linewidth]{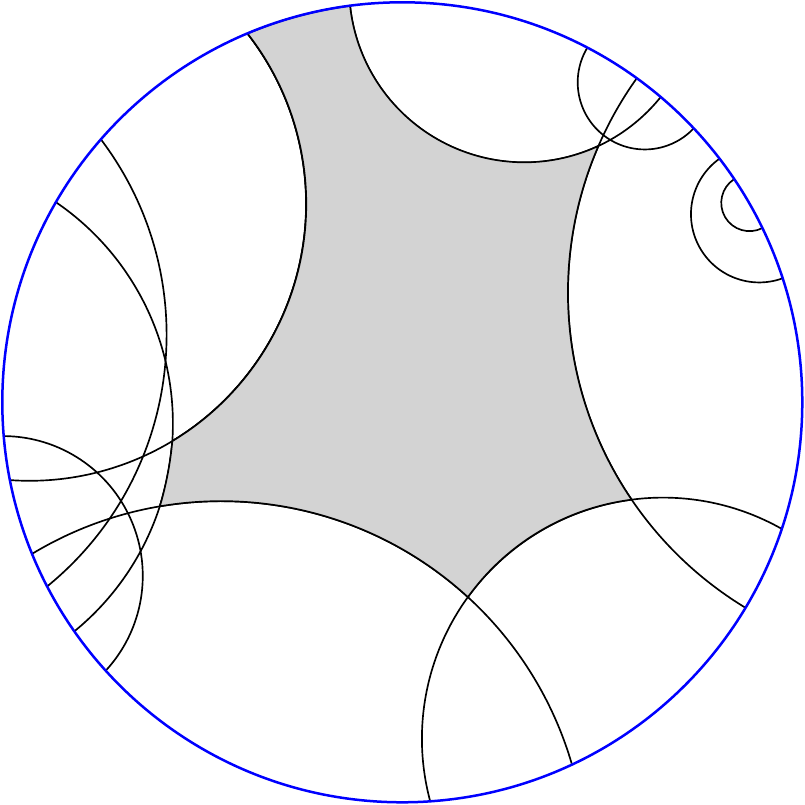}
  \captionof{figure}{$I(g)$ for all $g\in G$}\label{fig:normbound1}
\end{minipage}%
\begin{minipage}{.5\textwidth}
  \centering
  \includegraphics[width=.9\linewidth]{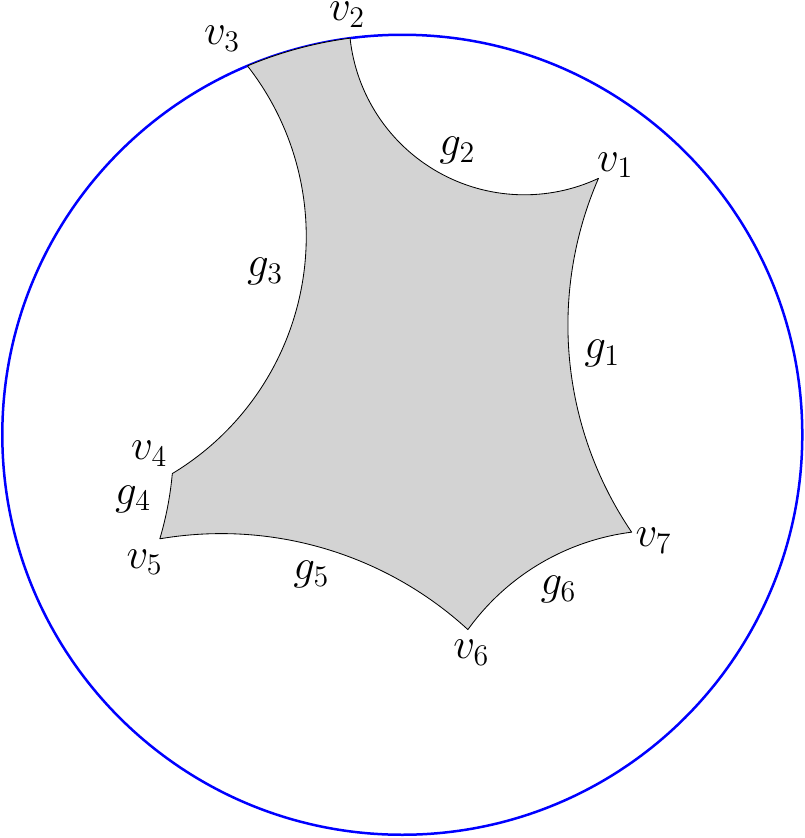}
  \captionof{figure}{Normalized boundary of $G$}\label{fig:normbound2}
\end{minipage}
\end{figure}

See Figures \ref{fig:normbound1} and \ref{fig:normbound2} for an example of going from the set of $I(g)$ for $g\in G$ to its normalized boundary. While this process is ``obvious'' visually, we require an algorithm that a computer can execute. Algorithm 2.5 of \cite{JV09} accomplishes this task, and it can be summarized as follows: start by intersecting all $I(g)$ with $[0,1]$ to find $g_1$. Next, given $g_1,g_2,\ldots,g_i$, find all intersections of $I(g_i)$ with $I(g)$ for $g\in G$, and choose the ``best one''. Repeat until $g_i=g_1$.

Taking $|G|=n$, the running time is $\Ot(nk)$, where $k$ is the number of sides of $U$. In practice, $k=\Ot(n)$ is typical, which gives a running time of $\Ot(n^2)$.

To improve this algorithm, consider the terminal points of $I(g_i)$, and observe that they are in order around the boundary of $\ud$! This follows from the fact that $I(g)$ and $I(g')$ cannot intersect twice in $\ud$. In particular, we can start by sorting $G$ by the arguments of the terminal points of $I(g)$, to get $g_1, g_2, \ldots, g_n$. The final normalized boundary will then be $g_{i_1}, g_{i_2}, \ldots, g_{i_k}$, where $i_1<i_2<\cdots<i_k$. We can iteratively construct this sequence $i_1, i_2,\ldots, i_k$ in $\Ot(n)$ steps, for a total running time of $\Ot(n\log(n))$ (due to the sorting).

\begin{algorithm}\label{alg:normbound}
Given a finite subset $G\subset\gamphi\backslash\{1\}$, this algorithm returns the normalized boundary of $\exter(G)$ in $\Ot(n\log(n))$ steps. The letters $U$ and $V$ track sequences of elements of $G$ and points in $\ud$ respectively.
\begin{enumerate}
\item Sort $G$ by the arguments (in $[0,2\pi)$) of the terminal points, to get $G=g_1, g_2, \ldots, g_n$ (take indices modulo $n$).
\item Let $H$ be the set of all $g$ such that $I(g)$ intersects $[0, 1]$. If $H$ is empty, go to step 3. Otherwise, go to step 4.
\item Let $U=g_1$, let $i=2$, let $V$ be the terminal point of $I(g_1)$, and continue to step 5.
\item Let $H'\subseteq H$ be the (non-empty) subset which gives the smallest intersection with $[0, 1]$. Let $g\in H'$ give the smallest angle of intersection of $I(g)$ with $[0, 1]$. Cyclically shift $G$ so that $g_1=g$, take $U=g_1$, let $i=2$, and let $V$ be the intersection of $I(g_1)$ with $[0, 1]$.
\item If $i=n+2$, delete $g_1$ from the end of $U$, and return $U$.
\item Assume $U$ ends with $g$, $V$ ends with $v$, and compute $v'=I(g)\cap I(g_i)$.
\begin{enumerate}
\item If $v'$ does not exist, then compare the terminal point of $I(g_i)$ with $I(g)$. If it lies in the interior, then increment $i$ by $1$, and go back to step 5. Otherwise, append $g_i$ to $U$, append the initial point of $I(g)$ to $V$, append the terminal point of $I(g_i)$ to $V$, increment $i$ by $1$, and go back to step 5.
\item If $v'$ is closer to the initial point of $I(g)$ than $v$, then append $g_i$ to $U$, append $v'$ to $V$, increment $i$ by $1$, and go back to step 5.
\item Otherwise, delete $g$ from the end of $U$, delete $v$ from the end of $V$, and go back to step 5.
\end{enumerate}
\end{enumerate}
\end{algorithm}
\begin{proof}
If $I(g)$ does not intersect $[0, 1]$ for all $g\in G$, then $g=g_1$ must minimize the argument of the terminal point of $I(g)$. Otherwise, $g=g_1$ minimizes the intersection with $[0, 1]$, and taking the smallest angle (if this is not unique) guarantees the selection. This is the content of steps 1-4.

For the rest of the algorithm, $U$ is the normalized boundary of $g_1, g_2, \ldots, g_{i-1}$, and $V$ tracks the intersection of $g$ with the previous side of $U$, where this side may be infinite, or $[0, 1]$ if $i=2$. We intersect $I(g_i)$ with $I(g)$, and if there is no intersection, then we are either enclosed inside $I(g)$, or there is an infinite side (which must exist in all subsequent iterations due to the sorting of $G$). If $v'$ is closer to the initial point of $I(g)$ than $v$, then we simply must add this new side in. Otherwise, this implies that the new side completely encloses the previous side, so backtrack by deleting the previous side and try again.

Our choice of $g_1$ guarantees that it will be a part of any partial normalized boundary, hence the sets $U$ and $V$ will never be empty. Furthermore, once we get to $i=n+1$, we still need to go on, since $g_{n+1}=g_1$ may completely enclose some of the final isometric circles in $U$. Once we have finished with this step, we have completed the circle, and are left with the normalized boundary.

As for the running time, the initial sorting is the bottleneck. If we assume that we start with a sorted $G$, then finding $g_1$ takes $\Ot(n)$ steps, and the rest of the algorithm also takes $\Ot(n)$ steps. Indeed, if we deleted $e$ elements from $U$ in a step, then we performed $e+1$ intersections. Since each element of $G$ can be added to $U$ at most once, and each element of $U$ can be deleted at most once, the result follows.
\end{proof}

Assume that Figure \ref{fig:normbound1} contains $I(g_1), I(g_2), \ldots, I(g_{11})$, ordered by argument of terminal points, with $I(g_1)$ intersecting $[0,1]$. Figure \ref{fig:normboundalg} contains the partial normalized boundaries computed by Algorithm \ref{alg:normbound}, and which values of $i$ they correspond to.

\begin{figure}[htb]
	\includegraphics{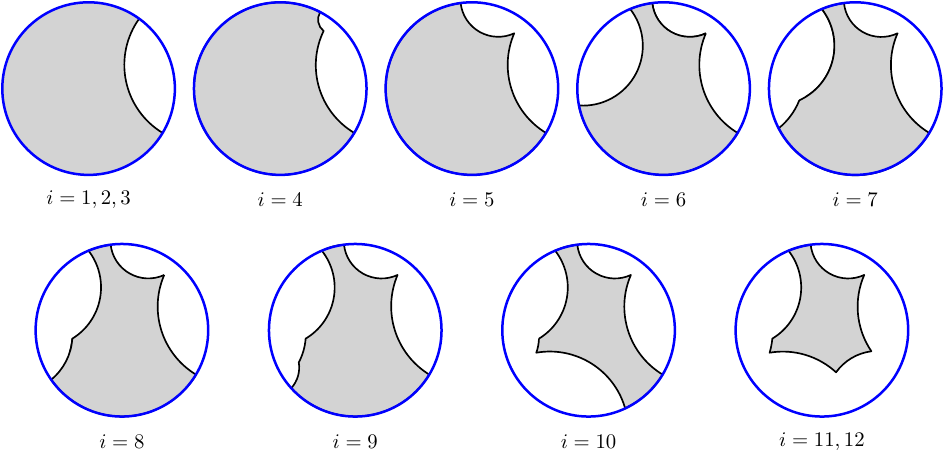}
	\caption{Partial normalized boundaries of Figure \ref{fig:normbound1} computed with Algorithm \ref{alg:normbound}}\label{fig:normboundalg}
\end{figure}

The order of magnitude of improvement is typically $\Ot\left(\frac{n}{\log(n)}\right)$, which is a substantial improvement since Algorithm \ref{alg:normbound} is called a large number of times. Furthermore, we will often be in the situation where we have a computed normalized boundary $E$ and a set $G'$, and we want to compute the normalized boundary of $E\cup G'$. Instead of blindly running Algorithm \ref{alg:normbound} on $E\cup G'$, we can save time by ``remembering'' that $E$ is a normalized boundary. We only have to sort $G'$, and combine this sorted list with the presorted $E$. When we are iteratively going through the new sequence, we can copy the data from $E$ when we are away from new sides coming from $G'$. This extra computational trick is most effective when the size of $G'$ is a much smaller than the size of $E$.

\begin{remark}
John Voight has made similar observations with regards to optimizing the geometric algorithms (personal communication). This code was implemented in Magma, but is not in the live version.
\end{remark}

\subsection{Reduction of points}

Given an enumeration of elements of $\Gamma$, Algorithm \ref{alg:normbound} is enough to compute the fundamental domain, since eventually we will have the boundary of $D(0)$ in our set $G$. However, this is far too slow in practice. One of the key ideas introduced by Voight in \cite{JV09} is the ability to efficiently compute $\exter(\spa{G})$, not just $\exter(G)$. By only requiring a set of generators, we need to enumerate less elements, which brings the total computation time to reasonable levels.

\begin{definition}
Let $G\subset\gamphi\backslash\{1\}$ be finite. Call $U$ a \textit{normalized basis} of $G$ if $U$ is the normalized boundary for $\exter(\spa{G})$.
\end{definition}

Before describing the computation of the normalized basis, we consider the reduction of elements/points to a normalized boundary. Take $G$ as above, let $z\in\inter(\ud)$, and consider the map
\begin{align*}
\rho:\Gamma\rightarrow & \R^{\geq 0}\\
\gamma \rightarrow & \rho(\gamma;z)=d(\gamma z, 0).
\end{align*}

\begin{definition}
An element $\gamma\in\Gamma$ is $(G, z)-$\textit{reduced} if for all $g\in G$, we have $\rho(\gamma;z)\leq \rho(g\gamma;z)$.
\end{definition}

Algorithm 4.3 of \cite{JV09} describes a process to reduce an element $\gamma\in\Gamma$, namely:
\begin{enumerate}
\item Compute $\min_{g\in G}\rho(g\gamma;z)$. If this is greater than or equal to $\rho(\gamma;z)$, return $\gamma$.
\item Replace $\gamma$ by $g\gamma$, where $g$ is the first element that attains the above minimum. Return to step 1.
\end{enumerate}
This algorithm terminates to produce an element $\red_G(\gamma;z):=\delta\gamma$, where $\delta\in\spa{G}$ and $\red_G(\gamma;z)$ is $(G, z)-$reduced. If $z=0$, write $\red_G(\gamma)$ for the reduction.

Proposition 4.4 of \cite{JV09} shows that if $U$ is the normalized basis of $G$, then for almost all $z\in\ud$, the element $\red_U(\gamma;z)$ is independent of all choices made. Furthermore, $\red_U(\gamma)=1$ if and only if $\gamma\in\spa{G}$. Thus, if $\gamma\in\spa{G}$, this algorithm is a way to write $\gamma^{-1}$ as a word in $U$.

While the above algorithm is valid for all finite sets $G\subset\gamphi\backslash\{1\}$, it turns out that whenever we want to reduce an element, we will have the normalized boundary of $G$, denoted $U$, at our disposal! We make the following observations:
\begin{itemize}
\item When replacing $\gamma$ with $g\gamma$, we do not need to have $\rho(g\gamma;z)=\min_{g'\in G}\rho(g'\gamma;z)$, all that is required is $\rho(g\gamma;z)<\rho(\gamma;z)$. The algorithm will still terminate in approximately the same number of steps;
\item By definition, $\rho(g\gamma;z)<\rho(\gamma;z)$ if and only if $\gamma z\in\inter(I(g))$;
\item If $\gamma z\in\inter(I(g))$ for some $g\in G$, then the straight line from $0$ to $\gamma z$ will intersect $U$ on the proper side that is part of $I(g)$ for a possible $g$. See Figure \ref{fig:reductionstep} for a demonstration of this fact.
\end{itemize}

\begin{figure}[ht]
	\includegraphics{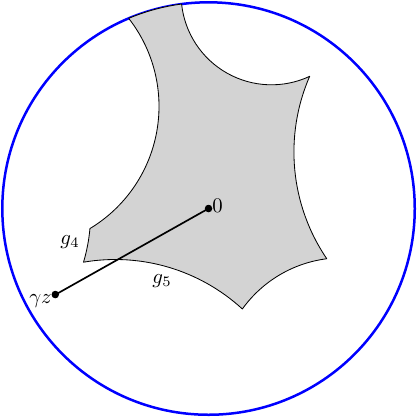}
	\caption{$\gamma z$ is in $\inter(I(g_4))$ and $\inter(I(g_5))$, and the line from $0$ to $\gamma z$ passes through $I(g_5)$.}\label{fig:reductionstep}
\end{figure}

By finding where $\arg(\gamma z)$ should be inserted in the list of arguments of vertices of $G$ (precomputed), we can determine a possible $g$ for each step (or show that we are done) in $\Ot(\log(n))$ steps. This is a large time save compared to the previous algorithm, where each step took $\Ot(n)$ steps.

\begin{algorithm}\label{alg:reduct}
Let $G$ be the normalized boundary of a finite subset of $\Gamma$, let the vertices of $G$ be $v_1, v_2, \ldots, v_k$, let $z\in\inter(\ud)$, and let $\gamma\in \Gamma$. The following steps return $\delta$, where $\red_G(\gamma;0)=\delta\gamma$ and $\delta\in\spa{G}$:
\begin{enumerate}
\item Initialize $\delta=1$, and $z'=\gamma z$.
\item Use a binary search to determine the index $i$ such that $\arg(v_i)\leq \arg(z')<\arg(v_{i+1})$.
\item Let $w$ be the intersection point of the side $v_iv_{i+1}$ (corresponding to $I(g)$) and the straight line (with respect to Euclidean geometry) between $0$ and $z'$.
\item If $|z'|\leq |w|$, return $\delta$. Otherwise, replace $(\delta, z')$ by $(g\delta, gz')$, and return to step 2.
\end{enumerate}
\end{algorithm}

\subsection{Side pairing}

One crucial omission thus far is the notion of a side pairing for Dirichlet domains. Let $P$ be a generalized hyperbolic polygon (allowing infinite sides), with proper sides $S$. Consider the set
\[\text{SP}=\{(g, s, s'):s'=g(s)\}\subseteq\Gamma\times S\times S,\]
and call two sides $(s, s')$ paired if there exists a $g\neq 1$ for which $(g, s, s')\in\text{SP}$, where we allow a side to be paired with itself (note that that this relation is symmetric, but not necessarily reflexive). Say $P$ has a \textit{side pairing} if this relation induces a partition of $S$ into singletons/pairs, i.e. for all $s\in S$ there exists a unique side $s'\in S$ with $(s, s')$ paired.

As noted in Proposition 1.1 of \cite{JV09}, the Dirichlet domain $D(p)$ has a side pairing. Furthermore, if $G$ is a normalized boundary whose exterior domain has a side pairing, then $\exter(G)$ is the fundamental domain of $\spa{G}$. In particular, to compute the normalized basis of a finite set $G$, it suffices to compute a normalized boundary $G'$ that has a side paring and $\spa{G'}=\spa{G}$.

In the case of Dirichlet domains, it is easy to see that if we have a side $s\subseteq I(g)$, then the only possible side $s$ could be paired with is $gs$. In particular, given a normalized boundary $G$ with vertices $v_1, v_2,\ldots,v_k$ and a proper side $s=v_iv_{i+1}\subseteq I(g)$, we can compute $g(v_i)$, search for its argument in the ordered list of $\arg(v_j)$, and then determine the $j$ such that $g(v_i)=v_j$ (or determine that no such $j$ exists). By computing $g(v_{i+1})$ and comparing it with $v_{j-1}$, we can determine if the side $s$ is paired or not. Since all operations besides the set search are $\Ot(1)$ time, the the side pairing test takes $\Ot(\log(n))$ time, with $|G|=n$. If we try to find all paired sides, this will take $\Ot(n\log(n))$ time.

\subsection{Normalized basis}
Algorithm 4.7 of \cite{JV09} details how to compute the normalized basis of $G$, and is not modified here. We do record it, to demonstrate that whenever we apply Algorithm \ref{alg:reduct} to reduce an element, the corresponding normalized boundary has been pre-computed (and thus we are able to apply our improved algorithms).

\begin{algorithm}\label{alg:normbasis}[Normalized basis algorithm, Algorithm 4.7 of \cite{JV09}]
Let $G\subset\gamphi\backslash\{1\}$ be finite. The normalized basis of $G$ can be computed as follows:
\begin{enumerate}
\item Let $G'=G\cup G^{-1}=\{g:g\text{ or }g^{-1}\in G\}$.
\item Use Algorithm \ref{alg:normbound} to compute the normalized boundary of $G'$, denoted $U$.
\item Let $G''=G'$. For each element $g\in G'$, compute $\red_U(g)$. If $\red_U(g)\neq 1$, extend $G''$ by $\red_U(g)^{-1}$.
\item Compute the normalized boundary of $G''$, denoted $U'$.
\item If $U'=U$ and $G''=G'$, then continue. Otherwise, replace $(G', U)$ by $(G'', U')$, and return to step 3.
\item Compute the set of paired sides of $U$. If this is a side pairing, then return $U$. Otherwise, for all pairs $(s, v)$ of an unpaired proper side $s$ with unpaired vertex $v$ (i.e. $s\subseteq I(g)$ and $gv$ is not a vertex of $U$), compute $\red_{G'}(g;v)$ and add it to $G''$ (if $v$ is a vertex at infinity, replace $v$ by a nearby point in the interior of $\ud$).
\item Let $G'=G''$, and return to step 2.
\end{enumerate}
\end{algorithm}

\section{The general algorithm (without enumeration)}\label{sec:fullalg}
With the geometry in hand, we describe the computation of the fundamental domain, since this will help guide us in considering the enumeration of elements of $\Gamma$.

\begin{algorithm}\label{alg:fullwithoracle}
Let $\Gamma$ be a discrete subgroup of $\PSL(2, \R)$, so that $\Gamma^{\phi}\subseteq\PSU(1, 1)$. Let $f(\Gamma)$ denote an oracle that returns a finite set of elements of $\Gamma$. This algorithm returns $D(0)$, which is a fundamental domain of $\Gamma$.
\begin{enumerate}
\item Compute $\mu(\Gamma)$ via theoretical means. Initialize $G=\emptyset$.
\item Call $f(\Gamma)$ to generate $G'\subseteq\Gamma^{\phi}$.
\item Use Algorithm \ref{alg:normbasis} to compute $U$, the normalized basis of $G'\cup G$.
\item Compute the hyperbolic area of $U$. If it is equal to $\mu(\Gamma)$, then return $U$. Otherwise, let $G=U$, and return to step 2.
\end{enumerate}
\end{algorithm}

Besides the oracle, this algorithm assumes two things: explicit computation of the hyperbolic area of $U$, and theoretical computation of $\mu(\Gamma)$. The first of these is easy to resolve: it is well known that the area of a hyperbolic polygon with $n$ sides and angles $\alpha_1, \alpha_2, \ldots, \alpha_n$ is
\[\mu(P)=(n-2)\pi-\sum_{i=1}^n \alpha_i.\]

Therefore, the only requirements to compute a fundamental domain are a way to generate (enough) elements of the group, and a computation of the area (or another means to determine when we are done). In the case of arithmetic Fuchsian groups, an enumeration is described in the next section, and the area computation is classical (see Theorem 39.1.8 of \cite{JV21} for a statement and proof).

\begin{remark}
The most expensive part of the area computation for arithmetic Fuchsian groups is the computation of $\zeta_F(2)$, where $F$ is the totally real number field. With larger fields, we need higher precision, and the cost to compute this zeta value accurately grows high. However, a precise value for $\zeta_F(2)$ is unnecessary! Indeed, as soon as we have a normalized basis with finite area, it will correspond to a finite index subgroup $\Gamma'$ of $\Gamma$. In particular, $\mu(\Gamma')=[\Gamma:\Gamma']\mu(\Gamma)\geq 2\mu(\Gamma)$ if they are not equal. Thus, it suffices to check that the hyperbolic area is less than $2\mu(\Gamma)$, which requires very little precision. In practice, it is incredibly unlikely to end up with a $\Gamma'\neq\Gamma$; the algorithm will nearly always stop once the area is finite.
\end{remark}

\section{Enumeration}\label{sec:enumeration}

Let $F$ be a totally real number field of degree $n$ with ring of integers $\mathcal{O}_F$, and let $B$ be a quaternion algebra over $F$ that is split at exactly one infinite place. Consider $B$ as being embedded in $\Mat(2, \R)$ via the unique split infinite place, and let $\Ord$ be an order of $B$. Then the group $\Gamma_{\Ord}:=\Ord^1/\{\pm 1\}\subseteq\PSL(2, \R)$, the group of units in $\Ord$ of reduced norm $1$, is an arithmetic Fuchsian group.

\begin{definition}
Label the Fuchsian group corresponding to $\Ord$ by the integer triple $(n, d, N)$, where $n$ is the degree of $F$, $d=\disc(F)$ is the discriminant of $F$, and $N=\Nm_{F/\Q}(\discrd(\Ord))$ is the norm to $\Q$ of the reduced discriminant of the order $\Ord$. Call this triple the \textit{data} associated to the group.
\end{definition}

Note that the data does not necessarily uniquely label a Fuchsian group, since distinct number fields may have the same discriminant, and nonisomorphic algebras/orders may give the same $N$.

As in Section \ref{sec:geometry}, fix $p=x+yi\in\uhp$, and let $\phi:\uhp\rightarrow\ud$ be the corresponding map sending $p$ to $0$. For $g=\genmtx\in\Mat(2, \R)$, define
\[f_g(p):=cp^2+(d-a)p-b.\]
If $g$ has norm $1$, let $g^{\phi}=\sm{A}{B}{C}{D}\in\SU(1, 1)$, and then a computation shows that
\[C=\dfrac{-\overline{f_g(p)}}{2iy},\text{ and the radius of $I(g)$ is }\dfrac{1}{|C|}.\]
In particular, if $M=\sm{A}{B}{C}{D}\in\text{U}(1, 1)$, define
\[f_M(p):=2iy\overline{C},\]
so that $f_g(p)=f_{g^{\phi}}(p)$.

\begin{definition}
Let $z_1, z_2\in\ud$, and fix $M_1, M_2\in\PSU(1, 1)$ so that $M_i(0)=z_i$ for $i=1,2$. For $g\in B$, define the quadratic form
\[Q_{z_1, z_2}(g):=\dfrac{1}{2y^2}\left|f_{M_2^{-1}g^{\phi}M_1}(p)\right|^2+\Tr_{F/\Q}(\nrd(g)).\]
\end{definition}

This is the analogue of $Q_{z_1, z_2}$ from Definition 28 in \cite{AP10}, and is similarly well defined (independent of $M_1, M_2$) and positive definite. In \cite{JV09}, Voight defined the absolute reduced norm $N$, which satisfies
\[N(g)=2y^2Q_{0, 0}(g).\]
The analogue of Proposition 30 of \cite{AP10} is Proposition \ref{prop:Qz1z2property}.

\begin{proposition}\label{prop:Qz1z2property}
If $g\in\Gamma_{\Ord}$, then
\[Q_{z_1,z_2}(g)=\cosh(d(g^{\phi}z_1, z_2))+n-1.\]
\end{proposition}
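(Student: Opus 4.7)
The plan is to split the claim into a ``split place'' contribution and a ``ramified places'' contribution, matching the two summands in the definition of $Q_{z_1,z_2}$.

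For the ramified-place summand: since $g\in\Gamma_{\Ord}$ lies in the norm-one group, $\nrd(g)=1\in F$, so $\Tr_{F/\Q}(\nrd(g))=\Tr_{F/\Q}(1)=n$. This will account for the $n-1$ on the right-hand side, once we show the split-place summand contributes $\cosh(d(g^{\phi}z_1,z_2))-1$.

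For the split-place summand, I would first reduce to the base case $z_1=z_2=0$ using isometry invariance of $d$. Setting $h:=M_2^{-1}g^{\phi}M_1\in\PSU(1,1)$, we have $h(0)=M_2^{-1}g^{\phi}z_1$, hence
\[
d(g^{\phi}z_1,z_2)=d(M_2^{-1}g^{\phi}z_1,M_2^{-1}z_2)=d(h(0),0),
\]
so it suffices to prove $\frac{1}{2y^2}|f_h(p)|^2=\cosh(d(h(0),0))-1$ for every $h\in\SU(1,1)$. (Well-definedness in $M_1,M_2$ then follows because the left side depends only on $h$, and the right side depends only on $g^{\phi}z_1$ and $z_2$, which are determined by $g,z_1,z_2$ alone.)

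The base identity is a direct computation. Writing $h=\sm{A}{B}{\overline{B}}{\overline{A}}$ with $|A|^2-|B|^2=1$, the relation $f_h(p)=2iy\overline{C}$ gives $|f_h(p)|^2=4y^2|B|^2$, so $\frac{1}{2y^2}|f_h(p)|^2=2|B|^2$. On the other side, $h(0)=B/\overline{A}$, so
\[
\cosh(d(h(0),0))=\frac{1+|h(0)|^2}{1-|h(0)|^2}=\frac{|A|^2+|B|^2}{|A|^2-|B|^2}=|A|^2+|B|^2=1+2|B|^2,
\]
which matches. Adding the ramified contribution $n$ to $\cosh(d(g^\phi z_1,z_2))-1$ yields the stated formula.

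The only real obstacle is bookkeeping: one must be careful that $g^\phi$ is viewed in $\PSU(1,1)$ only through the split embedding, and that the appearance of $n$ from $\Tr_{F/\Q}(\nrd(g))$ is the sole contribution of the $n-1$ compact places (this is exactly the Fuchsian specialization of Page's Proposition 30, where the ramified places contribute trivially to the geometric term because the norm-one units at those places sit inside compact Hamiltonian spheres and only feel $Q$ through $\nrd$).
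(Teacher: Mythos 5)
Your proof is correct and follows essentially the same route as the paper's: separate the $\Tr_{F/\Q}(\nrd(g))=n$ contribution, use $\PSU(1,1)$-isometry to reduce the distance to $d(M(0),0)$ with $M=M_2^{-1}g^{\phi}M_1$, and verify $\tfrac{1}{2y^2}|f_M(p)|^2=2|B|^2=\cosh(d(M(0),0))-1$ by direct computation. The only cosmetic difference is that you invoke $\cosh d=\frac{1+|z|^2}{1-|z|^2}$ where the paper writes $\cosh d-1=\frac{2|z|^2}{1-|z|^2}$; these are the same identity.
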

\begin{proof}
Let $M=M_2^{-1}g^{\phi}M_1=\sm{A}{B}{\overline{B}}{\overline{A}}$, with $|A|^2-|B|^2=1$. Since $\nrd(g)=1$, it suffices to prove that
\[\cosh(d(gz^{\phi}_1, z_2))-1=2|B|^2.\]
Since the action of $\PSU$ is isometric with respect to hyperbolic distance,
\[d(g^{\phi}z_1, z_2)=d(M_2^{-1}g^{\phi}z_1, M_2^{-1}z_2)=d(M0, 0).\]
Applying a classical formula for the hyperbolic distance, we get
\[\cosh(d(gz^{\phi}_1, z_2))-1=\dfrac{2|B/\overline{A}|^2}{1-|B/\overline{A}|^2}=\dfrac{2|B|^2}{|A|^2-|B|^2}=2|B|^2,\]
as desired.
\end{proof}

In particular, if we pick $z_1, z_2$ such that $gz_1$ is close to $z_2$ for some $g\in\Gamma_{\Ord}$, then by enumerating vectors $g'\in\Ord$ such that $Q_{z_1, z_2}(g')\leq C$ and checking which of them have reduced norm $1$, we can recover $g$. Since $\Ord$ is a positive definite rank $4n$ module over $\Z$, the small vectors of $Q_{z_1,z_2}$ can be enumerated with the Fincke-Pohst algorithm, \cite{FP85}.

In \cite{JV09}, the enumeration strategy was to solve $Q_{0,0}(g)\leq C$, and compute the normalized basis of the generated elements. If the area is larger than the target area, then increase $C$ and repeat. Since the boundary of the fundamental domain has finitely many sides and 
\[Q_{0,0}(g)=\dfrac{2}{\rad(I(g))^2}+n\]
for $g\in\Ord^1$, for large enough $C$ we obtain the fundamental domain. A downside to this strategy is the number of $g\in \Ord$ with $Q_{z_1, z_2}(g)\leq C$ grows like $C^{2n}$, the proportion of elements with $g\in\Ord^1$ shrinks, and thus the computation time blows up. The timing also has high variance, as the value of $C$ you need can vary greatly for similar examples.

In \cite{AP10}, Page introduced $Q_{z_1, z_2}$ (defined for Kleinian groups, which we specialized down to Fuchsian groups), and used this new freedom to define a probabilistic enumeration that greatly outperforms the deterministic one in practice. The outline of his enumeration is as follows:
\begin{itemize}
\item Pick a set of random points $Z$;
\item Solve $Q_{0, z}(g)\leq C$ for $z\in Z$;
\item Take the small vectors with $\nrd(g)=1$, and add them to your generating set;
\item Compute the normalized basis, and if the area is not correct, repeat these steps.
\end{itemize}

This strategy requires a couple of choices that are not immediately obvious:
\begin{itemize}
\item What value of $C$ is optimal?
\item How do we pick the random points $Z$, and how many of them are chosen in each pass?
\end{itemize}

Heuristics for both questions are given, but it is not clear if they remain optimal for the case of arithmetic Fuchsian groups. Furthermore, the heuristics do not specify the constants, which are necessary for an efficient practical implementation. We explore these questions, with plenty of data as evidence, in the following sections.

\begin{remark}
One subtle part of this strategy is checking if $\nrd(g)=1$. Since we are repeating this norm computation a large number of times, we pre-compute the Cholesky form of $\nrd(g)$, i.e. write it as a sum of (four) squares (see \cite{FP85} for more details). This speeds the norm computations up by a factor of $\approx 10$ over the PARI command ``algnorm''.
\end{remark}

\subsection{Choosing random points}\label{sec:pointchoice}

As in \cite{AP10}, we pick points uniformly from a hyperbolic disc of radius $R$. The choice of $R$ needs to be large enough that the random point is approximately uniform in the fundamental domain. On the other hand, if $R$ is too large, then precision issues may occur. We take $R=R_{\Ord}$, where
\[\mu(\text{disc of radius $R_{\Ord}$})=\mu(\Gamma_{\Ord})^{2.1}.\]
This is the same choice as the implementation of \cite{AP10}.

\begin{remark}\label{rem:almostdiameter}
There are several papers on the diameter of $\Guhp$, which lend support to $R_{\Ord}$ being large enough. Unconditional results are given by Chu-Li in \cite{CL16}, and conditional results on the almost-diameter are given by Golubev-Kamber in \cite{GK19}. Assuming the Selberg eigenvalue conjecture, their results imply that the almost-diameter of $\Guhp$ is bounded by
\[(1+o(1))\log(\mu(\Guhp)),\]
when $\Gamma$ is a congruence arithmetic Fuchsian group. An upcoming work by Steiner (\cite{RS21}) gives this result unconditionally in certain cases.
\end{remark}

\begin{remark}
Based on Remark \ref{rem:almostdiameter}, an exponent of $1+\epsilon$ in the definition of $R_{\Ord}$ would be sufficient to pick up the whole fundamental domain when $\Ord$ is Eichler. Since we do not want the random point to be biased, we increased the exponent to $2.1$ for $R_{\Ord}$. This is not a precise choice, merely a choice that worked sufficiently well in practice. For non-Eichler orders, a larger exponent \text{might} be required, or an alternate approach where the fundamental domain for a maximal order containing $\Ord$ is computed, and a coset enumeration algorithm is applied to push the domain down.
\end{remark}

\subsection{Choice of C}\label{sec:Cchoice}
Choosing the correct value of $C$ in the computation of $Q_{0, z}(g')\leq C$ is extremely important. If $C$ is too small, then it will take too many trials to get an element of reduced norm $1$, and if $C$ is too large, each individual trial will take too long.

First, we compute the probability of success of a trial. Assume that $z$ is chosen sufficiently randomly, take $g\in\Ord^1$, and let $x=d(g^{\phi}(0), z)$ be the distance between $g^{\phi}(0)$ and $z$. By Proposition \ref{prop:Qz1z2property}, the trial $Q_{0, z}(g')\leq C$ will find $g$ if and only if
\[\cosh(x)=\dfrac{e^x+e^{-x}}{2}\leq C+1-n.\]
Therefore we find $g$ if and only if the hyperbolic disc of radius $\cosh^{-1}(C+1-n)$ about $z$ contains $g^{\phi}(0)$. The expected number of such $g$'s is thus the area of this disc divided by $\mu(\Gamma_{\Ord})$. Since a hyperbolic disc of radius $R$ has area
\[4\pi\sinh(R/2)^2=\pi(e^R+e^{-R}-2)=2\pi(\cosh(R)-1),\]
we derive Heuristic \ref{heur:probsuccess}.

\begin{heuristic}\label{heur:probsuccess}
The expected number of elements of $\Ord^1$ that the trial $Q_{0, z}(g')\leq C$ outputs is
\[\mathbb{E}(\text{number of elements found})=\dfrac{2\pi(C-n)}{\mu(\Gamma_{\Ord})}.\]
\end{heuristic}

To demonstrate this heuristic, we run the trial $Q_{0, z}(g')\leq C$ across a range of $C$'s in two algebras. The points $z$ are chosen uniformly at random from a disc of radius $R_{\Ord}$. In Figures \ref{fig:d2_success} and \ref{fig:d3_success}, we display the results, including the straight line predicted by Heuristic \ref{heur:probsuccess} (we only count the non-trivial elements found). The $R^2$ values of the heuristic are $0.98840348$ and $0.96506302$ respectively. This data also lends support to our chosen radius being sufficiently large to model a random point of the fundamental domain.

\begin{figure}[htb]
  \centering
  \begin{minipage}{.5\textwidth}
    \centering
    \includegraphics[width=.9\linewidth]{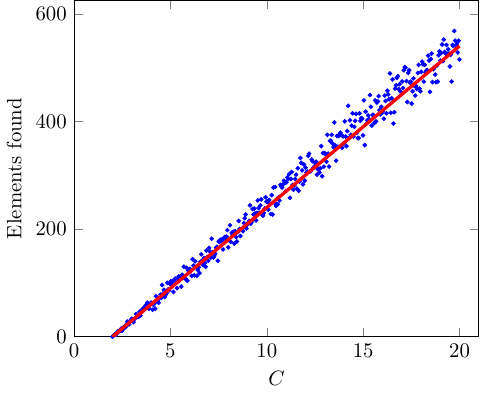}
    \captionof{figure}{\protect\raggedright Elements found in $1000$ trials of $Q_{0, z}(g')\leq C$ for curve $(2, 21, 101)$.}\label{fig:d2_success}
  \end{minipage}%NEED this to stop the line break and make it side by side
  \begin{minipage}{.5\textwidth}
    \centering
    \includegraphics[width=.9\linewidth]{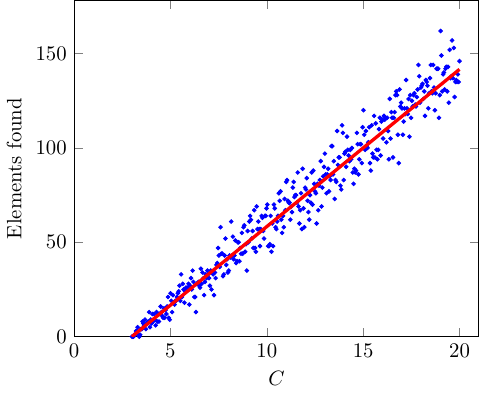}
    \captionof{figure}{\protect\raggedright Elements found in $1000$ trials of $Q_{0, z}(g')\leq C$ for curve $(3, 229, 106)$.}\label{fig:d3_success}
  \end{minipage}
\end{figure}

\begin{remark}\label{rem:oneonly}
If $C$ is small, then we will find at most one element of $\Ord$ in each trial. Indeed, if we found two elements $g_1, g_2$, then we must have $d(0, g_1^{-1}g_2(0))\leq 2\cosh^{-1}(C+1-n)$, i.e. $0$ is close to $g_1^{-1}g_2(0)$. Since the action of $\Gamma_{\Ord}^{\phi}$ is discrete, this gives a minimum bound on $C$ for this behaviour to occur. In practice, the optimal value of $C$ will typically be smaller than this, so we can stop a trial if we find a single element. Note that even if $C$ were just large enough, the second element found will be useless after the first time! An element is only useful if it is not in the span of all previously found elements. In this case, the quotient $g=g_1^{-1}g_2$ will be constant, so $g_2$ is in the span of $g_1$ and all previously found elements if we already have $g$.
\end{remark}
\begin{remark}
We can test $Q_{0,0}(g')\leq C$ at the start in an attempt to pick up some of these elements with large isometric circle radii. This is typically more useful when $\mu(\Gamma_{\Ord})$ is small, and becomes more effective as the degree of the number field grows (as the cost to generate a single element becomes high very quickly). We will use the same value of $C$ as for the tests $Q_{0, z}(g')\leq C$.
\end{remark}

To perform the enumeration of $Q_{0, z}(g')\leq C$, there are two distinct parts. Part one is the setup of Fincke-Pohst, i.e. computing the a series of matrix reductions to minimize failures in the enumeration. This part is independent of $C$. Part two consists of the actual enumeration, and we assume that the time taken is proportional to the number of vectors enumerated. This leads to Heuristic \ref{heur:timepertrial}.

\begin{heuristic}\label{heur:timepertrial}
The time to complete the enumeration of $Q_{0, z}(g')\leq C$ is
\[A+BC^{2n},\]
where $A, B$ are constants depending on $\Ord$.
\end{heuristic}

To demonstrate Heuristic \ref{heur:timepertrial}, we computed the enumeration time for $1000$ $C$'s in two quaternion algebras. Figure \ref{fig:d1_Ctime} demonstrates the results for $n=1$, along with the associated best fit curve $t=1.0560652\cdot 10^{-3}+1.1777653\cdot 10^{-8}C^2$, which gives an $R^2$ value of $0.99559127$ (the horizontal lines are due to PARI timings being integer multiples of one millisecond). Figure \ref{fig:d4_Ctime} demonstrates the results for $n=4$, along with the associated best fit curve $t=1.1927396\cdot 10^{-2}+6.3362747\cdot 10^{-14}C^8$, which gives an $R^2$ value of $0.99761548$.

\begin{figure}[htb]
\centering
\begin{minipage}{.5\textwidth}
  \centering
  \includegraphics[width=.9\linewidth]{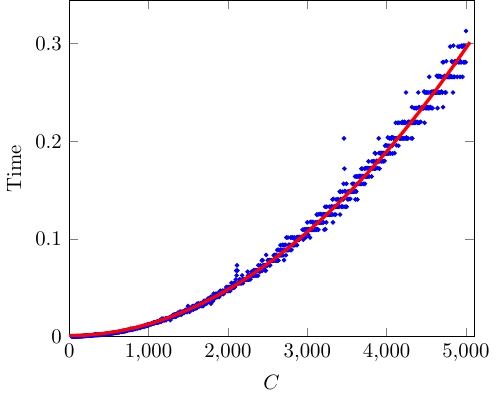}
  \captionof{figure}{\protect\raggedright Computation time of $Q_{0, z}(g')\leq C$ for curve $(1, 1, 2021)$.}\label{fig:d1_Ctime}
	\hfill
\end{minipage}%NEED this to stop the line break and make it side by side
\begin{minipage}{.5\textwidth}
  \centering
  \includegraphics[width=.9\linewidth]{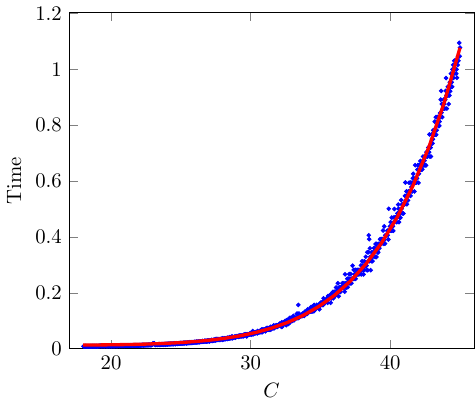}
  \captionof{figure}{\protect\raggedright Computation time of $Q_{0, z}(g')\leq C$ for curve $(4, 14013, 109)$.}\label{fig:d4_Ctime}
\end{minipage}
\end{figure}

In particular, combining Heuristics \ref{heur:probsuccess} and \ref{heur:timepertrial}, the expected time taken before a success is given by
\begin{equation}\label{eqn:expsuccess}
\left(\dfrac{\mu(\Gamma_{\Ord})B}{2\pi}\right)\dfrac{C^{2n}+A/B}{C-n}.
\end{equation}
Basic calculus implies that this is minimized for the unique real solution $C>n$ of the polynomial
\begin{equation}\label{eqn:minimalC}
(2n-1)C^{2n}-2n^2C^{2n-1}=A/B.
\end{equation}

\begin{remark}\label{rem:impdependent}
The values of $A$ and $B$ are highly dependent on factors like precision, processor speed, etc. However, these factors should affect $A$ and $B$ to approximately the same factor, leaving $A/B$ constant. On the other hand, $A/B$ is highly dependent on the implementation of Fincke-Pohst, the setup to the enumeration, or even the computation of the norm of an element.
\end{remark}

For a fixed $\Ord$, we can run similar computations to Figures \ref{fig:d1_Ctime} and \ref{fig:d4_Ctime}, and obtain approximate values for $A, B$ with a least squares regression. Solving Equation \eqref{eqn:minimalC} gives the optimal value of $C$ for this quaternion order. By performing this computation across a large selection of arithmetic Fuchsian groups, we can deduce a heuristic for the optimal value of $C$. This is presented in Heuristic \ref{heur:optC}, with the rest of this section dedicated to providing experimental evidence justifying the heuristic. It is important that the heuristic is run across a large range of $C$'s, algebra discriminants, and field discriminants, as otherwise the final constants may be significantly off.

\begin{heuristic}\label{heur:optC}
Let $\Ord$ have reduced discriminant $\mathfrak{D}$. The optimal choice of $C$ is given by
\[C_{\Ord}:=C_n\disc(F)^{1/n}\Nm_{F/\Q}(\mathfrak{D})^{1/2n},\]
for constants $C_n$. Approximate values for $C_n$ for $n\leq 8$ to 6 decimal places are:
\begin{center}
\begin{tabular}{|c|c||c|c|} 
\hline
$n$ & $C_n$        & $n$ & $C_n$        \\ \hline
1   & 2.830484 & 5   & 1.019539 \\ \hline
2   & 0.933176 & 6   & 1.018481 \\ \hline
3   & 0.909751 & 7   & 0.994256 \\ \hline
4   & 0.973456 & 8   & 0.964400 \\ \hline
\end{tabular}
\end{center}
\end{heuristic}

\begin{remark}
The non-constant part of $C_{\Ord}$ differs to the heuristic in \cite{AP10}. However, this was a typo! The Magma implementation of the algorithm uses the correct heuristic.
\end{remark}

\begin{remark}
We can also justify the heuristic theoretically (thanks to Aurel Page for the argument). Fix $n$, and consider Heuristic \ref{heur:timepertrial}. The term $BC^{2n}$ should be proportional to the number of elements enumerated, and thus $B$ is inversely proportional to the covolume of $\Ord$. Thus, $1/B$ should be proportional to $\disc(F)^2\Nm_{F/\Q}(\mathfrak{D})$. Assume $A$ is constant and $A/B$ is large, so that $C$ is also large (say $C\gg n$). Then the secondary term of Equation \eqref{eqn:minimalC} can be ignored, and Heuristic \ref{heur:optC} follows.
\end{remark}

The value of $C_n$ is dependent on implementation. If there is a future improvement in part of the implementation, then the new optimal algorithm would necessarily have larger optimal values for $C_n$. However, unless it was a very significant improvement, the current optimal values of $C_n$ would be still perform well.

The behaviour of $C_n$ is hard to explain. When $n\geq 2$, we have to deal with number theory arithmetic, which is more costly than integer arithmetic. This may explain the big drop from $C_1$ to $C_2$, but does not explain the further oscillation. For $n\geq 9$, we suggest taking $C_n=1$ as a baseline, since the data does hover around this value. When running a large computation with $n\geq 9$, it would also be worth running these experiments again for this $n$, or at least experimenting with the value of $C_n$ a bit to get a more accurate value, and thus better performance.

\subsection{Computational evidence}

To demonstrate Heuristic \ref{heur:optC}, start by fixing $F$ and varying $\mathfrak{D}$. By computing $A$ and $B$ (with a regression) as in Heuristic \ref{heur:timepertrial} and solving for $C$ with Equation \eqref{eqn:minimalC}, we can determine the optimal $C$ for each case. In Figure \ref{fig:d1_Arange}, we carry this out for 400 quaternion algebras over a quadratic field. The curve of best fit, $C=3.43356822\left(\Nm_{F/\Q}(\mathfrak{D})\right)^{1/4}$, is given in red. In Figure \ref{fig:d4_Arange}, this is done for 400 quaternion algebras in a quartic setting. The curve of best fit is $C=6.54768871\left(\Nm_{F/\Q}(\mathfrak{D})\right)^{1/8}$. 

\begin{figure}[htb]
\centering
\begin{minipage}{.5\textwidth}
  \centering
  \includegraphics[width=.9\linewidth]{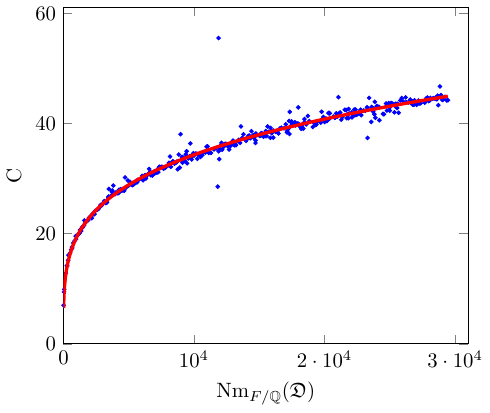}
  \captionof{figure}{\protect\raggedright $C$ for 400 quaternion algebras over $\Q(y)$, with $y^2-13=0$.}\label{fig:d1_Arange}
	\hfill
\end{minipage}%NEED this to stop the line break and make it side by side
\begin{minipage}{.5\textwidth}
  \centering
  \includegraphics[width=.9\linewidth]{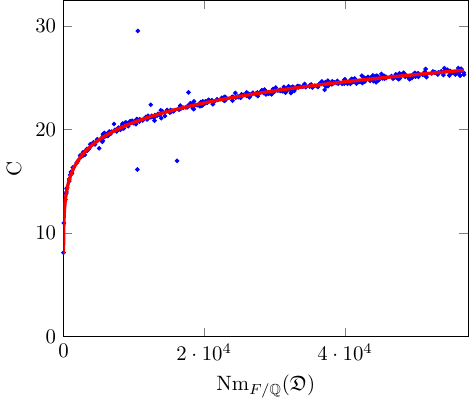}
  \captionof{figure}{\protect\raggedright $C$ for 400 quaternion algebras over $\Q(y)$, with $y^4 - 5y^2 + 5=0$.}\label{fig:d4_Arange}
\end{minipage}
\end{figure}

Next, fix $n$, and vary $F$. For each value of $n$ we take $400$ quaternion algebras over totally real number fields of degree $n$, and compute $C'=C/\Nm_{F/\Q}(\mathfrak{D})^{1/2n}$. The curves of best fit are all of the form $C'=C_n\disc(F)^{1/n}$. The data for $n=2$ and $n=3$ is displayed in Figures \ref{fig:d2_Frange} and \ref{fig:d3_Frange} respectively.

\begin{figure}[htb]
\centering
\begin{minipage}{.5\textwidth}
  \centering
  \includegraphics[width=.9\linewidth]{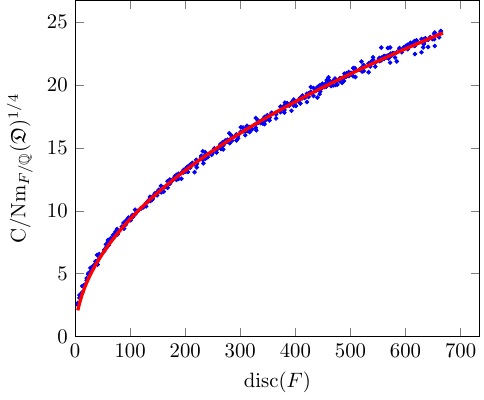}
  \captionof{figure}{\protect\raggedright $C'$ for $n=2$.}\label{fig:d2_Frange}
	\hfill
\end{minipage}%NEED this to stop the line break and make it side by side
\begin{minipage}{.5\textwidth}
  \centering
  \includegraphics[width=.9\linewidth]{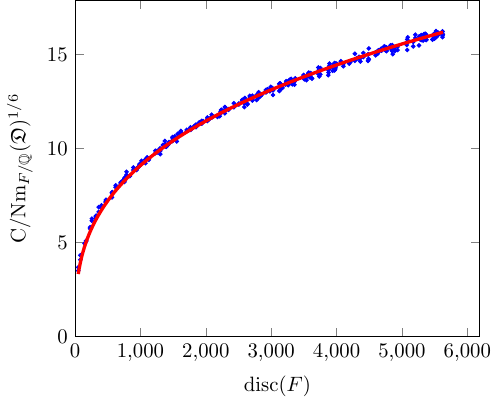}
  \captionof{figure}{\protect\raggedright $C'$ for $n=3$.}\label{fig:d3_Frange}
\end{minipage}
\end{figure}

These computations also give the values of $C_n$, as found in Heuristic \ref{heur:optC}. Since they are experimentally found, running the computations again will produce slightly different values; the values given are only intended as approximations.

To demonstrate that this theory actually works, we can fix an algebra, compute the time taken to find $N$ non-trivial elements over a range of $C$'s, and verify that Heuristic \ref{heur:optC} is close to the observed minimum. In Figures \ref{fig:d2_Nelttime} and \ref{fig:d3_Nelttime}, we take curves with data $(2, 5, 101)$ and $(3, 316, 46)$, and compute the time to generate 1000 non-trivial elements of each for 500 values of $C$ (outputting at most one value for each $z$, due to Remark \ref{rem:oneonly}). A dotted red line is drawn at the value of $C$ predicted by Heuristic \ref{heur:optC} to be minimal. In each example this value is close to the absolute minimum.

\begin{figure}[htb]
\centering
\begin{minipage}{.5\textwidth}
  \centering
  \includegraphics[width=.9\linewidth]{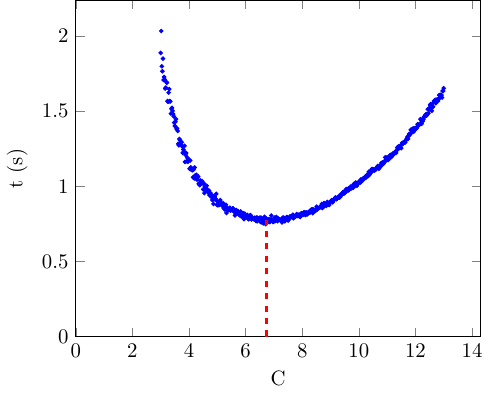}
  \captionof{figure}{Time to obtain 1000 elements for $(2, 5, 101)$.}\label{fig:d2_Nelttime}
	\hfill
\end{minipage}%NEED this to stop the line break and make it side by side
\begin{minipage}{.5\textwidth}
  \centering
  \includegraphics[width=.9\linewidth]{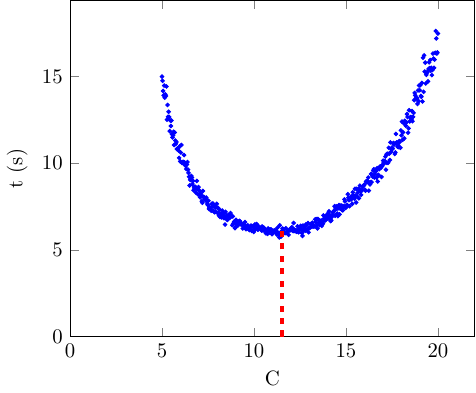}
  \captionof{figure}{Time to obtain 1000 elements for $(3, 316, 46)$.}\label{fig:d3_Nelttime}
\end{minipage}
\end{figure}

\subsection{Improved Fincke-Pohst}
Let $Q(x_1,x_2,\ldots, x_N)$ be a positive definite quadratic form. The general idea of the Fincke-Pohst enumeration of $Q(x)\leq C$ is to make a change of basis to variables $x_1', x_2',\ldots, x_N'$, write $Q$ as a sum of squares in this basis, and incrementally bound $x_{N}', x_{N-1}', \ldots, x_1'$. The change of basis was chosen in a way to minimize the number of ``dead ends'' in the incremental enumeration, and is the key part of the algorithm (for details, see Section 2 of \cite{FP85}). In our situation, we also have an (indefinite) quadratic form $Q'$ for which we require $Q'(x_1, x_2, \ldots, x_N)=1$. If we compute the change of basis for $Q'$ to the variables $x_i'$, then when we get to $x_1'$, we see that it must satisfy a quadratic equation! In particular, there are at most two possibilities for it. By solving this quadratic over the integers, we can determine if we have a solution or not. Note that we may find solutions with $Q(x)>C$, but this does not concern us since they still give an element of $\Ord^1$.

Label the classical Fincke-Pohst by ``FP'', and this new approach by ``IFP''. If we have many choices for $x_1'$, then IFP should be a faster algorithm, since we solve one quadratic equation over $\Z$ as opposed to checking the norms of a large set of elements. On the other hand, this requires a lot more (number field) arithmetic, some of which will be ``useless'' when we reach dead ends. To determine the efficacy, we compute the time required to find $1000$ non-trivial elements in a given quaternion algebra with each enumeration method (as before, stopping each trial after finding an element). We use a range of $C$'s, as the optimal value of $C$ for this approach may not be $C_{\Ord}$ (in particular, Heuristic \ref{heur:timepertrial} is no longer valid). Considering the already computed examples in Figures \ref{fig:d2_Nelttime} and \ref{fig:d3_Nelttime}, we re-compute the time taken with IFP. The output (with IFP shown in green with triangle markers) is Figures \ref{fig:d2_Nelttime_both} and \ref{fig:d3_Nelttime_both}.

\begin{figure}[hbt]
\centering
\begin{minipage}{.5\textwidth}
  \centering
  \includegraphics[width=.9\linewidth]{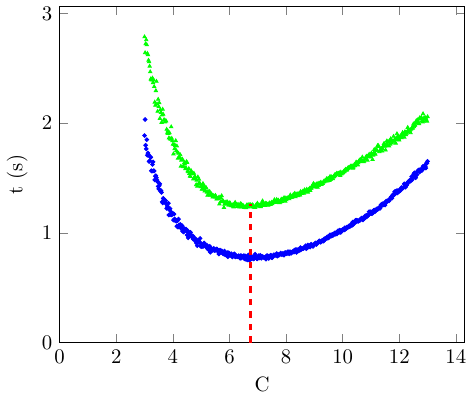}
  \captionof{figure}{Time to obtain 1000 elements for $n=2$, FP and IFP.}\label{fig:d2_Nelttime_both}
	\hfill
\end{minipage}%NEED this to stop the line break and make it side by side
\begin{minipage}{.5\textwidth}
  \centering
  \includegraphics[width=.9\linewidth]{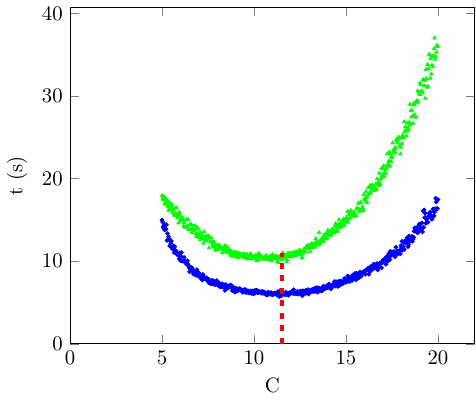}
  \captionof{figure}{Time to obtain 1000 elements for $n=3$, FP and IFP.}\label{fig:d3_Nelttime_both}
\end{minipage}
\end{figure}

In particular, IFP is slower for both examples. This fact continues to hold for all other examples attempted with $n\geq 2$. On the other hand, if $n=1$ (i.e. $F=\Q$), then IFP appears to be faster. For example, see Figures \ref{fig:d1_Nelttime_both} and \ref{fig:d1_Nelttime2_both}. The value of $C_{\Ord}$ is no longer optimal for IFP, but it is still quite reasonable and better than before. 

\begin{figure}[htb]
\centering
\begin{minipage}{.5\textwidth}
  \centering
  \includegraphics[width=.9\linewidth]{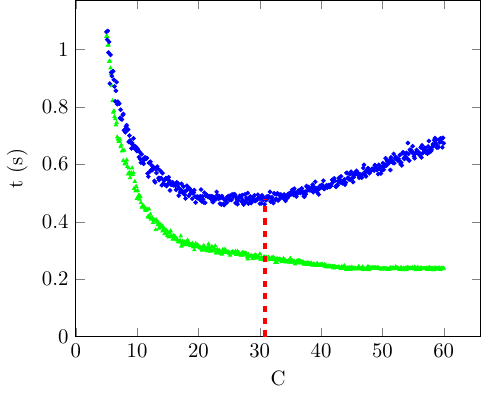}
  \captionof{figure}{Time to obtain 1000 elements for curve $(1, 1, 119)$,  FP and IFP.}\label{fig:d1_Nelttime_both}
	\hfill
\end{minipage}%NEED this to stop the line break and make it side by side
\begin{minipage}{.5\textwidth}
  \centering
  \includegraphics[width=.9\linewidth]{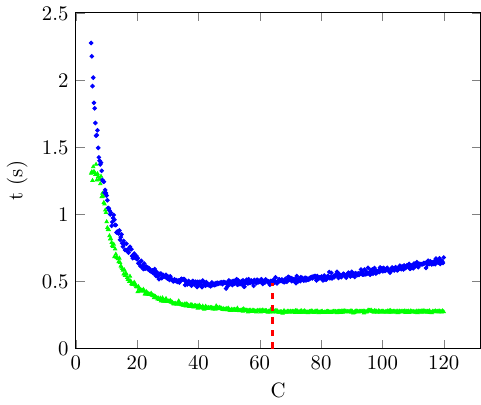}
  \captionof{figure}{Time to obtain 1000 elements for curve $(1, 1, 514)$, FP and IFP.}\label{fig:d1_Nelttime2_both}
\end{minipage}
\end{figure}

\begin{observation}
For the enumeration, we should use IFP when $n=1$, and FP when $n>1$.
\end{observation}

The difference in behaviour between $n=1$ and $n>1$ likely comes from the limiting of trials to one element. IFP thrives on cutting down large ranges for $x_1'$, and when $n>1$, we don't get these large ranges. If we instead find all norm one elements in a trial, then IFP will become more efficient than FP for large enough $C$. While this is irrelevant with regards to the current enumeration, it is useful for Voight's enumeration of elements in \cite{JV09}, which is to solve $Q_{0,0}(g)\leq C$ for increasingly large $C$.

\subsection{Balancing enumeration and geometry}\label{sec:balance}

At this point, algorithms to efficiently generate elements of $\Ord^1$ and compute normalized bases have been described. The final question is: how do we strike a balance between these two operations? If we compute the normalized basis too often, then we are wasting a lot of time with useless calls. On the other hand, if we call it less often, then we may enumerate too many elements before finishing.

One other benefit of normalized basis calls was noted in both \cite{JV09} and \cite{AP10}. If we have an infinite side of a partial fundamental domain, then an element $g\in\Ord^1$ for which $I(g)$ closes off (part) of this side can be picked up with $Q_{0,z}$ for some $z$ near the infinite side. In particular, by searching there, we can increase the chance of finding useful elements.

On the geometric side, computing the normalized basis with Algorithm \ref{alg:normbasis} should take $O(n\log(n))$ steps, where $n$ is the number of elements. To estimate the number of sides of the final fundamental domain, we do this computation for $1000$ domains with hyperbolic area at most $1000$ (distributed among $\deg(F)\leq 4$). As seen in Figure \ref{fig:sidesvarea}, the number of sides is typically proportional to the area (the analogous statement was noted for arithmetic Kleinian groups at the end of \cite{AP10}). In fact, the line of best fit is
\[\text{Number of sides}\approx 0.94747172\mu(\Gamma_{\Ord}),\]
which has an $R^2$ value of $0.99273664$. Assuming we don't compute the normalized basis \textit{too} often, this final computation should dominate. Therefore we expect the geometric part of the algorithm to grow proportionally to $\mu(\Gamma_{\Ord})\log(\mu(\Gamma_{\Ord}))$.

For the enumeration, as noted in \cite{AP10}, a number of elements proportional to $\mu(\Gamma_{\Ord})$ should generate $\Gamma_{\Ord}$ (see Theorem 1.5 of \cite{BGLS10}). Due to the probabilistic nature of the method, there is not a precise constant of proportionality for which we can guarantee success. Instead, we will get a rough idea of the proportion by computing a large number of examples. Using the same input algebras as Figure \ref{fig:sidesvarea}, we find the smallest $N$ such that the first $N$ random elements we computed were sufficient to generate the fundamental domain. This data is displayed in Figure \ref{fig:neltsvarea}, and does appear to be approximately linear, as expected. Based on this data, around $\frac{1}{4}\mu(\Gamma_{\Ord})$ elements is a good target to generate the fundamental domain.

\begin{figure}[htb]
\centering
\begin{minipage}{.5\textwidth}
  \centering
  \includegraphics[width=.9\linewidth]{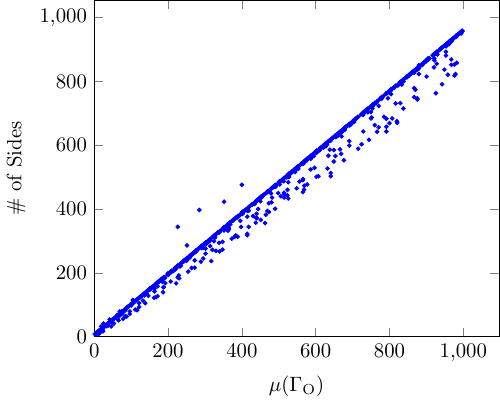}
  \captionof{figure}{Number of sides of the of the fundamental domain.}\label{fig:sidesvarea}
	\hfill
\end{minipage}%NEED this to stop the line break and make it side by side
\begin{minipage}{.5\textwidth}
  \centering
  \includegraphics[width=.9\linewidth]{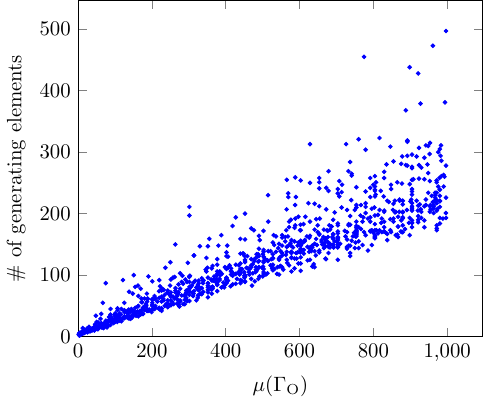}
  \captionof{figure}{Number of random elements required to generate the fundamental domain.}\label{fig:neltsvarea}
\end{minipage}
\end{figure}
%Slope=0.25645570940266703905257285147136383852, R^2=0.80982672113828457334327387213731227666 for second figure

Considering Equation \eqref{eqn:expsuccess}, we can generate an element in expected time $O(\mu(\Gamma_{\Ord}))$. Furthermore, combining the heuristics gives a computable constant $w$ such that $w\mu(\Gamma_{\Ord})^2$ random points should be close to generating the fundamental domain. 

With these heuristics in hand, we suggest choosing a constant $c<1$, and picking $cw\mu(\Gamma_{\Ord})^2$ points in each iteration. Experiments show that $c=\frac{1}{2}$ when $n=1$, and $c=\frac{1}{12}$ when $n>1$ are reasonable choices.

\begin{remark}
As seen in the previous section, the ``cost'' of choosing a poor value of $C_n$ was extremely high. On the other hand, the cost of a poor choice for the number of random centres in each iteration is much smaller. Any reasonable choice will perform decently well, and we thus do not delve as deep into the choice of $c$ in each situation as we did for the choice of $C$.
\end{remark}

\begin{remark}
Another solution to striking a balance between enumeration and geometry would be to use parallel computing. One processor would be enumerating group elements, with the other processor computing normalized bases (and supplying information on missing infinite sides). Furthermore, by having multiple processors enumerating elements, one can speed up the enumeration by a constant factor. 
\end{remark}

\section{Sample timings}\label{sec:time}

While the currently implemented algorithms correspond to the content of this paper, that may change in the future. If better approaches/constant choices/programming tricks/etc. are found, then the algorithm timings will change with it. The hardware of a computer will also affect things, and this may not be a constant factor either. In any case, these timings are a representative of the current state of affairs, and give a general guideline. All computations in this section were run on the same McGill University server as Table \ref{table:rtimes1}. This server is not particularly fast, so you will likely see similar or better times on your own machine.

\begin{figure}[hbt]
\centering
\begin{minipage}{.5\textwidth}
  \centering
  \includegraphics[width=.9\linewidth]{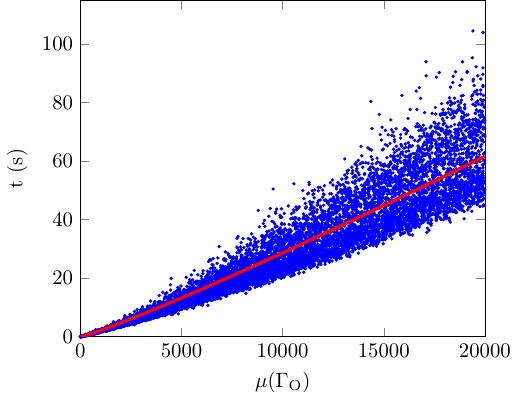}
  \captionof{figure}{Time to compute the fundamental domain, $n=1$.}\label{fig:times_d1}
	\hfill
\end{minipage}%NEED this to stop the line break and make it side by side
\begin{minipage}{.5\textwidth}
  \centering
  \includegraphics[width=.9\linewidth]{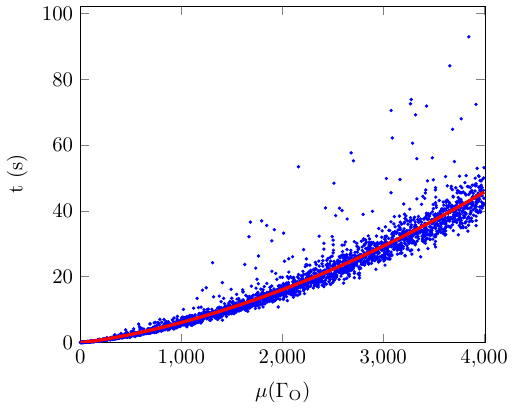}
  \captionof{figure}{Time to compute the fundamental domain, $n=2$.}\label{fig:times_d2}
\end{minipage}
\end{figure}

Considering the heuristics given in Section \ref{sec:balance}, the expected running time is
\[c_1\mu\log(\mu)+c_2\mu^2,\]
where $\mu=\mu(\Gamma_{\Ord})$, and $c_1,c_2$ depend on $n$. While the enumeration time will eventually be the most costly part of the algorithm, the normalized basis will dominate for smaller areas.

For $n=1$, we computed all fundamental domains with area at most $20000$ that correspond to maximal orders (there are $9550$ such examples). In this range, the normalized basis dominates, and the curve with $(c_1, c_2)=(0.88824714, 0)$ is shown in red in Figure \ref{fig:times_d1}.

For $n=2$, we computed $2975$ examples, all with area at most $4000$, over three fields. The curve with $(c_1, c_2)=(0.00066605054, 0.0000014754184)$ is shown in red in Figure \ref{fig:times_d2}. The enumeration becomes the dominant part of the computation time at $\mu(\Gamma_{\Ord})\approx 3700$.

\begin{figure}[hbt]
\centering
\begin{minipage}{.5\textwidth}
  \centering
  \includegraphics[width=.9\linewidth]{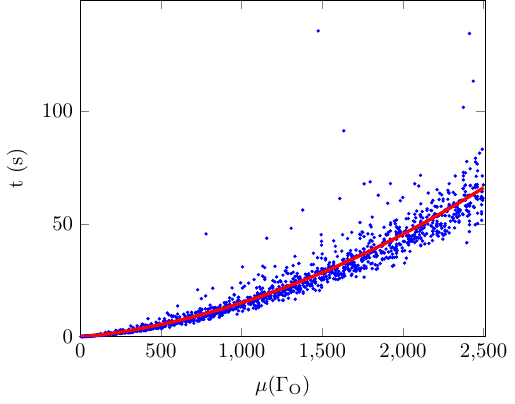}
  \captionof{figure}{Time to compute the fundamental domain, $n=3$.}\label{fig:times_d3}
	\hfill
\end{minipage}%NEED this to stop the line break and make it side by side
\begin{minipage}{.5\textwidth}
  \centering
  \includegraphics[width=.9\linewidth]{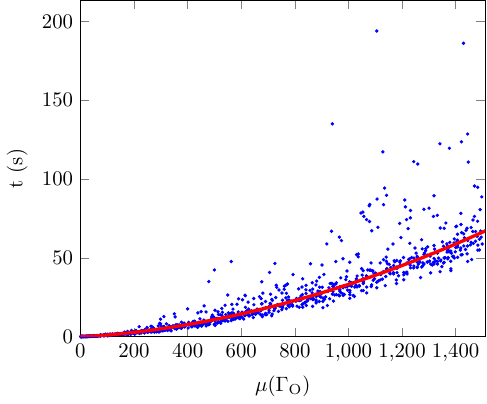}
  \captionof{figure}{Time to compute the fundamental domain, $n=4$.}\label{fig:times_d4}
\end{minipage}
\end{figure}

For $n=3$, we computed $1481$ examples, all with area at most $2500$, over four fields. The curve with $(c_1, c_2)=(0.0011822431, 0.0000068505131)$ is shown in red in Figure \ref{fig:times_d3}.

Finally, for $n=4$, we computed $912$ examples, all with area at most $1500$, over six fields. The curve with $(c_1, c_2)=(0.0018590790, 0.000021764718)$ is shown in red in Figure \ref{fig:times_d4}.

\bibliographystyle{alpha}
\bibliography{../references}
\end{document}